\newtheorem{thm}{Theorem}[section]
\newtheorem*{thm*}{Theorem}
\newtheorem{lemma}[thm]{Lemma}
\newtheorem*{lemma*}{Lemma}
\newtheorem*{claim*}{Claim}
\newtheorem{cor}[thm]{Corollary}
\newtheorem{prob}{Problem} 
\newtheorem*{prob*}{Problem}
\theoremstyle{definition}
\newtheorem*{define*}{Definition}
\newtheorem*{note*}{Notation}
\renewenvironment{proof}[1][]{\begin{trivlist}
\item[\hspace{\labelsep}{\bf\noindent Proof#1.\/}] }{\qed\end{trivlist}}
\newcommand{\remove}[1]{}
\title{Large monochromatic triple stars in edge colourings}
\date{21 April, 2013}
\author{Shoham Letzter
\thanks{Department of Pure Mathematics and Mathematical Statistics, Centre for Mathematical Sciences, Wilberforce
Road, Cambridge, CB3 0WB, UK. Email: s.letzter@dpmms.cam.ac.uk }
}
\begin{document}

\maketitle

\begin{abstract}
\setlength{\parindent}{0in} 
\setlength{\parskip}{.08in} 
\noindent
Following problems posed by Gy\'arf\'as \cite{gyarfas_survey}, we show that for every $r$-edge-colouring of $K_n$ there is a monochromatic triple star of order at least $n/(r-1)$, improving Ruszink\'o's result \cite{diam5}.

An edge colouring of a graph is called a local $r$-colouring if every vertex spans edges of at most $r$ distinct colours. We prove the existence of a monochromatic triple star with at least $rn/(r^2-r+1)$ vertices in every local $r$-colouring of $K_n$. 
\setlength{\parskip}{.1in} 
\end{abstract}

\section{Introduction}
A very simple observation, remarked by Erd\H{o}s and Rado, is that when the edges of $K_n$ are $2$-coloured there exists a monochromatic spanning component.
One can generalize this  and look for large monochromatic components satisfying certain conditions. For example, it is an easy exercise to show that every $2$-colouring of $K_n$ has a spanning component of diameter at most $3$ (see \cite{manuscript}, \cite{mubayi}).
As a further generalization, one can consider edge colourings with more than two colours.
Gy\'arf\'as \cite{gyarfas} extended the above observation by showing that every $r$-colouring of $K_n$ has a monochromatic component with at least $n/(r-1)$ vertices. This is tight when there exists an affine space of order $r-1$ and $(r-1)^2$ divides $n$. F\"uredi \cite{furedi} improved this bound in the case when there exists no affine space of order $r-1$, showing that for such $r$ every $r$-colouring of $K_n$ has a monochromatic component with at least $n/(r-1-(r-1)^{-1})$ vertices.

A \emph{double star} is a tree obtained by joining the centres of two stars by an edge.
Gy\'arf\'as \cite{gyarfas_survey} proposed the following problem.
\begin{prob}\label{prob_1}
 Is it true that for $r\ge 3$  every $r$-colouring of $K_n$ contains a monochromatic double star of size at least $n/(r-1)$?
\end{prob}
For $r=2$ the answer to this question is negative. It is shown in \cite{double_star} (and  implicitly in \cite{domination}) that when $K_n$ is $2$-coloured there is a monochromatic double star of size at least $3n/4$. This can be shown to be asymptotically tight using random graphs.
The best known result so far for $r\ge 3$ was obtained by Gy\'arf\'as and S\'ark{\"o}zy \cite{double_star}. They showed that when the edges of $K_n$ are $r$-coloured there is a monochromatic double star of size at least $\frac{n(r+1)+r-1}{r^2}$.

A weaker version of the above problem is as follows. 
\begin{prob}\label{prob_2}
Is there a constant $d$ for which in every $r$-colouring of $K_n$ there exists a monochromatic component of diameter at most $d$ and size at least $n/(r-1)$?
\end{prob}
Note that an affirmative answer to the first problem implies an affirmative answer to this one with $d=3$, which would be best possible (see 	\cite{fowler}).
Ruszink\'o \cite{diam5} solved the last problem with $d=5$, showing that for every $r$-colouring of $K_n$ there is a monochromatic component of diameter at most $5$ with at least $n/(r-1)$ vertices.

The first main result of this short note proves a weaker version of the first problem. A \emph{triple star} is a tree obtained by joining the centres of three stars by a path of length $2$.
\begin{thm}\label{thm_triple_star}
Let $G=K_n$ be $r$-edge-coloured with $r\ge 3$. Then $G$ contains a monochromatic triple star with at least $n/(r-1)$ vertices.
\end{thm}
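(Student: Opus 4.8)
\ The plan is to extract from Gy\'arf\'as' theorem a large monochromatic component $C$ — say in red, with $|C|\ge t:=n/(r-1)$ — and then to search for a red triple star inside the first two neighbourhood shells of a well-chosen vertex. Pick $v\in C$ of maximum red-degree $\Delta$, set $A=N_{\mathrm{red}}(v)$ and $M=V(C)\setminus(A\cup\{v\})$. If $\Delta+1\ge t$ we are done, since the red star at $v$ is already a triple star (take its centre path to be (leaf)--$v$--(leaf)) on at least $t$ vertices; so assume $\Delta+1<t$, whence $M\ne\emptyset$ and $|M|=|C|-1-\Delta$.

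Now suppose for contradiction that every monochromatic triple star has fewer than $t$ vertices. Inspecting the red triple stars with centre path $w$--$v$--$w'$ for $w,w'\in A$, and $v$--$w$--$u$ for $w\in A$ and $u\in M$ a red neighbour of $w$, this assumption is exactly the statement that every vertex of $A$ — and, similarly, every vertex of the red distance-$2$ shell of $v$ — has fewer than $t-\Delta-1$ red neighbours in $M$. Hence there are fewer than $\Delta(t-\Delta-1)$ red edges between $A$ and $M$, and therefore more than $\Delta|M|-\Delta(t-\Delta-1)=\Delta\,(|C|-t)$ non-red edges between $A$ and $M$. When $|C|>t$ this surplus of non-red edges, confined to the complete bipartite graph between $A$ and $M$ and using only $r-1$ colours, should yield a large monochromatic structure in some new colour $c'\ne\mathrm{red}$ — via a bipartite analogue of Gy\'arf\'as' bound, proved by the same convexity computation — which one then feeds back into the argument with one fewer colour available on the relevant part, setting up an induction that terminates after at most $r-1$ steps. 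The set-cover and convexity bookkeeping at each stage, together with the standard bipartite component estimate, is the routine part.

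The genuine obstacle is the boundary case $|C|=t$ (and $|C|$ only barely above $t$), where the surplus above is $0$ and the edge count is vacuous. Here one must use the extra information that $C$ carries no spanning red triple star — equivalently, $C$ cannot be dominated by three consecutive vertices, so it is ``thin'' — together with the fact that $|C|$ sits at the extremal value $n/(r-1)$. One natural route is to pass to the $(r-1)$-colouring induced on the $t$-set $V(C)$: since $C$ is thin it contains few red edges, so some other colour is dense on $V(C)$, and one hopes to locate a monochromatic triple star spanning essentially all of $V(C)$ (this works cleanly when $C$ is a red path, whose complement within $V(C)$ is dense enough to contain a spanning triple star, but the intermediate ``thin but not a path'' regime, and large $r$, need more — possibly iterated — work). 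Alternatively one can try to rerun the whole argument starting from Ruszink\'o's diameter-$5$ component and gain the last unit of diameter by a direct shell-by-shell covering, or adapt F\"uredi's refinement of Gy\'arf\'as' bound to rule out the thin near-extremal configurations outright. Making one of these precise — that is, showing a thin monochromatic component of size exactly $t$ forces a large monochromatic triple star in another colour — is where the main work lies.
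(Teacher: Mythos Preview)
Your proposal is not a proof: you yourself identify the boundary case $|C|=t$ as a ``genuine obstacle'' and offer only speculative routes around it. That gap is real, and none of the three suggested fixes works without substantial new ideas (in particular, ``thin'' is far too weak a structural hypothesis on $C$ to force a large triple star in another colour on $V(C)$, and the iteration you sketch in the $|C|>t$ regime never quantifies what ``large monochromatic structure'' you actually obtain in colour $c'$, or why it should be a triple star of size $\ge t$).

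The paper's argument bypasses the boundary difficulty entirely by changing the starting object. Rather than taking a large monochromatic \emph{component} (which may have size exactly $t$, killing your surplus), one takes $U$ to be the vertex set of a monochromatic \emph{double star} of maximum order, say in colour $r$. Under the contradiction hypothesis, $|U|=t-a$ with $a>0$ automatically, so there is no boundary case. Each $u\in U$ sends fewer than $a$ colour-$r$ edges to $V\setminus U$ (otherwise adjoining those neighbours to the double star gives a triple star of size $\ge t$), so the bipartite graph $G_2$ of non-$r$ edges between $U$ and $V\setminus U$ has more than $|U|(n-|U|)-a|U|$ edges.

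The second ingredient you are missing is the bipartite double-star lemma (Mubayi; Liu--Morris--Prince): any bipartite graph with parts $A,B$ and edge set $E$ contains a double star on at least $(1/|A|+1/|B|)|E|$ vertices, proved by one application of Cauchy--Schwarz to $\sum_{e=(a,b)}(d(a)+d(b))$. Applying this to the majority colour in $G_2$ (there are only $r-1$ colours present) and doing the arithmetic, one finds a monochromatic double star strictly larger than $|U|$, contradicting the maximality of $U$. No induction, no iteration, and the inequality $r\ge 3$ is used exactly once, to make $r-2\ge 1$ in the final line of the estimate.

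So the two key moves are: (i) optimise over double stars rather than components, which makes the deficit $a$ strictly positive and turns the contradiction into ``we found a bigger double star''; and (ii) use the convexity/Cauchy--Schwarz lemma to locate that bigger double star in the bipartite leftover. Your shell-by-shell analysis inside a single component does not give access to either of these.
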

Note that this is sharp in certain cases, namely whenever $n/(r-1)$ is a sharp lower bound for general monochromatic components in $r$-colourings of $K_n$.
The claim in the theorem does not hold for $r=2$. In \cite{domination} it is implicitly shown that every $2$-coloured $K_n$ contains a monochromatic triple star of size $7n/8$. Furthermore, this is shown to be asymptotically tight using random colourings. 
As an immediate corollary of theorem \ref{thm_triple_star} we answer problem \ref{prob_2} with $d=4$, improving Ruszink\'o's result. 
\begin{cor}
Let $r\ge 3$.
In every $r$-colouring of $K_n$ there is a monochromatic subgraph of diameter at most $4$ on at least $n/(r-1)$ vertices.
\end{cor}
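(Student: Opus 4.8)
The plan is to translate the problem into a statement about colour-degrees, dispose of two easy cases (which also cover all small $n$), and then analyse the largest monochromatic component.

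For the translation, fix a colour $i$ and a path $a-b-c$ all of whose edges have colour $i$. Installing each vertex of $N_i(a)\cup N_i(b)\cup N_i(c)$ as a leaf of whichever of $a,b,c$ it is joined to in colour $i$ produces a genuine monochromatic triple star with spine $a,b,c$; since $\{a,b,c\}\subseteq N_i(a)\cup N_i(b)\cup N_i(c)$, this star has exactly $|N_i(a)\cup N_i(b)\cup N_i(c)|$ vertices, and no triple star with this spine has more. So, writing $m:=n/(r-1)$, it is enough to find a colour $i$ and a colour-$i$ path $a-b-c$ with $|N_i(a)\cup N_i(b)\cup N_i(c)|\ge m$.

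The first easy case is when some vertex $v$ has $|N_i(v)|\ge m-1$ for some colour $i$: then, as $m\ge 3$, $v$ has two colour-$i$ neighbours $a,a'$, and the colour-$i$ star on $N_i(v)\cup\{v\}$, of order at least $m$, lies inside the triple star with spine $a,v,a'$. So one may assume every colour-degree is at most $m-2$; since every vertex spans at most $r$ colours, $n-1=\sum_i|N_i(v)|\le r(m-2)$, which rearranges to $n\ge(2r-1)(r-1)$, so from now on $n$ may be assumed that large. The second easy case is when some vertex $v$ has $|N_i(v)|\le r-2$ for some colour $i$: then the at least $n-r+1$ edges at $v$ avoiding colour $i$ are split among the $r-1$ remaining colours, so some colour $j$ has $|N_j(v)|\ge(n-r+1)/(r-1)=m-1$ and the first case applies. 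Hence one may assume every colour-degree of every vertex lies in $[r-1,\,m-2]$.

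For the main case I would apply Gy\'arf\'as's theorem~\cite{gyarfas} to get a monochromatic component $C$, say in colour $1$, with $|C|\ge m$, and take $C$ of largest order over all colours; by F\"uredi's theorem~\cite{furedi} one may also assume an affine space of order $r-1$ exists, since otherwise $|C|$ already exceeds $m$ with room to spare. Either some colour-$1$ path $a-b-c$ inside $C$ satisfies $|N_1(a)\cup N_1(b)\cup N_1(c)|\ge m$, and we are done, or every colour-$1$ path in $C$ has $|N_1(a)\cup N_1(b)\cup N_1(c)|<m$; the latter, combined with all colour-$1$ degrees being at most $m-2$, forces $C$ to be ``long and thin'' --- no colour-$1$ subgraph of bounded radius covers it. Running a breadth-first search in colour $1$ from a centre of $C$, the thinness should produce a vertex $v$ (or a short colour-$1$ path) such that a great many vertices of $C$ lie at colour-$1$ distance at least $3$ from $v$; those vertices, and everything outside $C$, reach $v$ only through colours $2,\dots,r$, and one exploits this surplus of non-colour-$1$ edges --- with a careful choice of $v$, iterating along the BFS tree if need be --- to assemble a monochromatic triple star of order $\ge m$ in one of the other colours.

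The step I expect to be the real obstacle is exactly this last one. Because the bound $n/(r-1)$ is tight --- the affine-space colourings leave no slack whatsoever --- one cannot absorb the loss of a naive ``majority colour at a single vertex'' argument, since the non-colour-$1$ degree of a vertex of $C$ always falls just short of $m(r-1)$; the thinness of $C$ has to be used quantitatively, balancing how much of $C$ can be swallowed by a bounded-radius colour-$1$ subgraph against how much must be kept in reserve, and the case $r=3$ (where $C$ could be a single long cycle and the complementary structure is merely $2$-coloured) will likely need its own argument. Everything else --- the translation, the two easy cases, and the final step from a monochromatic star to a triple star --- is routine.
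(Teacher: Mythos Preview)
Your proposal has a genuine gap: the main case is not proved. You reduce to the situation where every colour-degree lies in $[r-1,\,m-2]$, take a largest monochromatic component $C$, observe that if no colour-$1$ path $a\!-\!b\!-\!c$ in $C$ has $|N_1(a)\cup N_1(b)\cup N_1(c)|\ge m$ then $C$ must be ``long and thin'', and then propose to run a BFS and ``exploit the surplus of non-colour-$1$ edges'' to build a large triple star in another colour. But you never carry this out --- you yourself flag it as ``the real obstacle'', note that the affine-plane extremal examples leave no slack, and suggest $r=3$ may need separate treatment. That is not a proof; it is a plan whose hard step is missing, and there is no indication the BFS/thinness heuristic can be made to close exactly at $n/(r-1)$.

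The paper bypasses all of this with a short extremal argument that never looks at a large monochromatic component. It takes $U$ to be the vertex set of a \emph{largest monochromatic double star}, say in colour $r$, and assumes $|U|=n/(r-1)-a$ with $a>0$. Each $u\in U$ sends fewer than $a$ colour-$r$ edges to $V\setminus U$ (else one extends the double star to a triple star of order $\ge n/(r-1)$), so the bipartite graph $G_2$ on $U\cup(V\setminus U)$ formed by the non-$r$ edges has more than $|U|(n-|U|)-a|U|$ edges. A one-line Cauchy--Schwarz lemma (Mubayi; Liu--Morris--Prince) gives in any bipartite graph a double star on at least $(\tfrac{1}{|A|}+\tfrac{1}{|B|})|E|$ vertices; applying it to the majority colour in $G_2$ and doing the arithmetic yields a monochromatic double star strictly larger than $|U|$, contradicting maximality. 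The corollary about diameter $4$ is then immediate, since a triple star has diameter at most $4$. Note how different the pivot is: the paper works with the largest monochromatic \emph{double star} and a bipartite edge-count, not with the largest monochromatic component and a BFS.
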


A \emph{local $r$-colouring} is an edge colouring in which for every vertex the edges incident to it have at most $r$ distinct colours.
In \cite{local_colouring} it is shown that in every local $r$-colouring of $K_n$ there is a monochromatic component with at least $\frac{rn}{r^2-r+1}$ vertices. This is sharp when  there exists a projective plane of order $r-1$ and $r^2-r+1$ divides $n$.
In \cite{double_star} it is shown that in local $r$-colourings of $K_n$ there is a monochromatic double star of size at least $\frac{(r+1)n+r-1}{r^2+1}$. Moreover, it is shown that for local $2$-colouring of $K_n$ there exists a monochromatic double star of size at least $2n/3$, and, as mentioned above, this is a sharp lower bound for the size of general monochromatic connected components.
Our second main result shows that the above  lower bounds for monochromatic components can be achieved also for components which are triple stars. Namely,
\begin{thm} \label{thm_local_triple_stars}
Let $G=K_n$ be $r$-locally-coloured with $r\ge 3$. Then $G$ contains a monochromatic triple star with at least $\frac{rn}{r^2-r+1}$ vertices.
\end{thm}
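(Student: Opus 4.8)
The plan is to follow the proof of Theorem~\ref{thm_triple_star}, feeding in the local component bound of \cite{local_colouring} in place of Gy\'arf\'as's component theorem and tracking the local structure throughout. The first, purely structural, observation is the following reduction: if some colour $c$ admits a colour-$c$ path $a-b-c'$ of length two, then $K_n$ contains a monochromatic triple star in colour $c$ whose vertex set is exactly $N_c(a)\cup N_c(b)\cup N_c(c')$. Indeed, take $a,b,c'$ as the three centres, assign to $a$ the leaves $N_c(a)\bs\{b,c'\}$, then to $b$ the still-unused vertices of $N_c(b)$, and finally to $c'$ the still-unused vertices of $N_c(c')$; since $ab,bc'$ have colour $c$ we have $\{a,b,c'\}\subseteq N_c(a)\cup N_c(b)\cup N_c(c')$, so no vertex of the union is wasted. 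Hence it suffices to exhibit a colour $c$ and a colour-$c$ path $a-b-c'$ with $|N_c(a)\cup N_c(b)\cup N_c(c')|\ge rn/(r^2-r+1)$.

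I would then set $t=rn/(r^2-r+1)$, assume for contradiction that $K_n$ has no monochromatic triple star on $t$ vertices, and let $C$ be a largest monochromatic component, say in colour $1$, with $|C|=m$; by \cite{local_colouring} we have $m\ge t$, and we may assume $m<n$. Applying the reduction above to colour-$1$ paths of length two inside $C$ shows that every vertex $b\in C$ with colour-$1$-degree at least two is the centre of such a path, whence $d_1(b)<t$; so all colour-$1$ degrees inside $C$ are smaller than $t$ (and, componentwise, the same holds in every colour). When $m$ is comfortably larger than $t$ this is a genuine restriction: each vertex of $C$ then sends more than $m-1-t$ non-colour-$1$ edges inside $C$, these use at most $r-1$ colours at that vertex (the colouring is local and colour $1$ occurs at every vertex of $C$), so by averaging some second colour is dense on a large subset of $C$, yielding a monochromatic component $D$ meeting $C$ substantially; iterating the analysis on colour-$2$ paths inside $D$, and using that a vertex $v\notin C$ joins all of $C$ by edges avoiding colour~$1$ (hence using at most $r$ colours at $v$, and at most $r-1$ when colour~$1$ occurs at $v$), one pushes the parameters until they collide. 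The remaining regime, where $m$ is close to $t$, requires the sharpest accounting: here one uses that the sets $\{c:\ c\text{ occurs at }v\}$ are at-most-$r$-element sets any two of which intersect (every edge of $K_n$ forces this), which is exactly the configuration realised by the pencils of lines through the points of a projective plane of order $r-1$ --- the extremal example --- and this is what pins the constant at $rn/(r^2-r+1)$.

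The main obstacle is precisely this constant-chasing. Naive pigeonholing of the form ``$v$ sees at most $r$ colours and each colour class at $v$ lies in a component or triple star of bounded size'' only yields bounds of order $n/r$; to reach $rn/(r^2-r+1)$ one has to combine, simultaneously and tightly, (i) the near-optimality of $|C|$ coming from \cite{local_colouring}, (ii) the smallness of monochromatic degrees inside large components in the bad case, and (iii) the asymmetry between vertices that do and do not meet colour~$1$, all while keeping the projective plane as the unique equality case. I expect the proof to break into a short list of cases according to whether a single colour already provides a component that is large \emph{and} dense, or whether several colours must be pieced together, with the bulk of the work being the verification that every case attains the bound $rn/(r^2-r+1)$.
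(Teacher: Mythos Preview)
Your proposal is a plan rather than a proof: you explicitly flag the constant-chasing as ``the main obstacle'' and leave it undone, and the concrete steps you do give do not reach $rn/(r^2-r+1)$. For instance, your averaging inside the largest component $C$ yields at each vertex a non-colour-$1$ star of size exceeding $(m-1-t)/(r-1)$; but even with $m=n$ this is below $t$, since $rt=r^2n/(r^2-r+1)>n$ for $r\ge 2$. The subsequent ``iteration on colour-$2$ paths inside $D$'' and the appeal to projective-plane structure are heuristics, not arguments, and it is precisely there that the work lies. So there is a genuine gap: bounding monochromatic degrees via triple stars is sound, but you have not shown how to combine these bounds to force the correct constant, and your own outline anticipates a case split whose analysis is absent.

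The paper's proof is quite different and avoids all of this. Instead of the largest monochromatic \emph{component}, it takes $U$ to be the vertex set of a largest monochromatic \emph{double star}, say in colour $r$, and assumes $|U|=\frac{rn}{r^2-r+1}-a$ with $a>0$. Each vertex of $U$ then has fewer than $a$ colour-$r$ edges to $V\setminus U$ (otherwise one gets a triple star of size $\ge |U|+a$), so the bipartite graph $G_2$ of non-colour-$r$ edges across the cut has $|E|>|U|(n-|U|)-a|U|$. The key lemma is a local-colouring Cauchy--Schwarz bound: if in a bipartite graph with parts $A,B$ every vertex of $A$ sees at most $s$ colours and every vertex of $B$ at most $t$, then some monochromatic double star has at least $\bigl(\frac{1}{s|A|}+\frac{1}{t|B|}\bigr)|E|$ vertices. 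In $G_2$ the part $U$ uses at most $r-1$ colours per vertex (colour $r$ has been removed) and $V\setminus U$ at most $r$; a single computation with these parameters shows the monochromatic double star so obtained strictly exceeds $|U|$, contradicting maximality. No iteration, no cases, and the constant falls out of one inequality.
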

 
As before, the following corollary is immediate.
\begin{cor}
Let $r\ge 3$. In every $r$-local-colouring of $K_n$ there exists a monochromatic component of diameter at most $4$ with at least $\frac{rn}{r^2-r+1}$ vertices.
\end{cor}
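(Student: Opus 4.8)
The plan is to follow the proof of Theorem~\ref{thm_triple_star}, using the sharp lower bound on the size of monochromatic components in local colourings from \cite{local_colouring} in place of Gy\'arf\'as's bound, and the local hypothesis in place of the global bound of $r$ colours. Write $m=rn/(r^2-r+1)$. By \cite{local_colouring} there is a monochromatic component $C$ with $|C|\ge m$; say it is red, with $|C|=c\ge m$, and fix it as a largest monochromatic component. We shall produce a red triple star on at least $m$ vertices inside $C$. The starting observation is that if $ab$ and $bc$ are red edges, then the largest red triple star with centre path $a\text{--}b\text{--}c$ has vertex set exactly $\{a,b,c\}\cup N_{\mathrm{red}}(a)\cup N_{\mathrm{red}}(b)\cup N_{\mathrm{red}}(c)$ — every red neighbour of $a$, $b$ or $c$ other than these three being taken as a leaf of one of them — so the task is to find such a path for which this union has size at least $m$.

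First I would pick a vertex $b\in C$ of maximum red-degree and set $A=N_{\mathrm{red}}(b)$; note $|A|\ge 2$ since otherwise $C$ would be a single red edge or vertex. If $|A|\ge m-1$ then any red triple star with centre $b$ already spans $\{b\}\cup A$, and we are done, so assume $|A|\le m-2$ and put $R=C\setminus(A\cup\{b\})$, the vertices of $C$ at red-distance at least $2$ from $b$; all red neighbours of a vertex of $R$ lie in $A\cup R$, and $|A|+|R|+1=c$. Now, if there are distinct $a,z\in A$ with $|(N_{\mathrm{red}}(a)\cup N_{\mathrm{red}}(z))\cap R|\ge m-|A|-1$, then the red triple star with centre path $a\text{--}b\text{--}z$ spans at least $(|A|+1)+(m-|A|-1)=m$ vertices; and if some vertex of $R$ has red-degree at least $m$, a red triple star centred at it (using two of its red neighbours as outer centres) is large enough. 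So assume neither of these occurs.

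Under these assumptions the red graph on $C$ is \emph{thin} away from $b$: no two vertices of $A$ together red-dominate as many as $m-|A|-1$ vertices of $R$, and every vertex of $R$ has red-degree less than $m$. I would derive a contradiction by a counting argument. For each $w\in R$, the non-red edges from $w$ to $A\cup\{b\}$ use at most $r-1$ colours by the local hypothesis, so some colour $j=j(w)\ne\mathrm{red}$ carries a $1/(r-1)$ fraction of them; aggregating over $R$, if many vertices of $R$ share the same $j$ and send their $j$-edges into a small part of $A\cup\{b\}$, one gets either a $j$-coloured component larger than $c$, contradicting the maximality of $C$, or a vertex of $A\cup\{b\}$ meeting more than $r$ colours, contradicting locality. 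Playing these constraints off against $|A|+|R|+1=c\ge m$ yields a contradiction, paralleling the endgame of Theorem~\ref{thm_triple_star}.

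The step I expect to be the main obstacle is precisely this last counting, in the regime where $c$ is close to the extremal value $m$: there $C$ must closely resemble a blow-up of a line of a projective plane of order $r-1$ — the configuration achieving the bound of \cite{local_colouring} — and one has to show that in this near-extremal regime $C$ is nevertheless dense enough to contain a spanning red triple star (equivalently, has red-diameter at most $4$ with a dominating red path of length $2$). Making the thinness estimates robust enough to cover all near-extremal configurations, rather than only the exactly extremal one, is where the real work lies; the remaining cases should reduce to routine estimates of the kind already needed for Theorem~\ref{thm_triple_star}.
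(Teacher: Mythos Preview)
Your proposal is not a proof: the decisive step --- the counting argument in the ``thin'' case --- is left as a sketch, and you acknowledge as much. The difficulty is real. From the hypotheses that no two vertices of $A$ jointly red-dominate $m-|A|-1$ vertices of $R$ and that every vertex of $R$ has red-degree below $m$, you want to force either a non-red component larger than $C$ or a vertex incident with more than $r$ colours. But pigeonholing the majority colours $j(w)$ over $w\in R$ does not by itself produce a \emph{connected} $j$-structure, and the local hypothesis bounds only the number of colours at each vertex, not globally, so there is no mechanism to concentrate the $j(w)$ onto a single colour or a small target set. You would need an inequality of strength comparable to the Cauchy--Schwarz step in the paper, and it is not clear one is available in your framework. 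There is also a more basic worry: you insist on locating the large triple star in red, inside the largest component $C$. This is strictly stronger than what is needed, and it is not obvious a priori that the largest monochromatic component must contain a triple star of size $m$ in its own colour; Theorem~\ref{thm_local_triple_stars} only promises one in \emph{some} colour.

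For comparison, the paper derives this corollary in one line from Theorem~\ref{thm_local_triple_stars}, and proves that theorem by a completely different and much shorter route. One takes $U$ to be the vertex set of a largest monochromatic \emph{double star} (not a largest component), assumes for contradiction that $|U|<m$, and looks at the bipartite graph between $U$ and $V\setminus U$ consisting of the edges not in the colour of $U$. A Cauchy--Schwarz lemma tailored to the local setting --- each vertex of $U$ sees at most $r-1$ colours across this cut, each vertex outside at most $r$ --- then yields a monochromatic double star strictly larger than $U$, a contradiction. There is no case analysis, no appeal to the component structure, and no analysis of near-extremal configurations.
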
 
We prove theorem \ref{thm_triple_star} in section \ref{sec_colouring}, and theorem \ref{thm_local_triple_stars} in section \ref{sec_local_colourings}. In the last section \ref{sec_conclusion} we finish with some concluding remarks and open problems.

\section{Triple stars in  edge colourings} \label{sec_colouring}

\begin{proof}[ of theorem \ref{thm_triple_star}]
We assume to the contrary of the statement in the theorem that $G$ contains no monochromatic triple star of the given size.
Let $G_1$ be a subgraph of $G$ which is a monochromatic double star of maximal order and let $U$ be its vertex set. Denote the colour of the edges of $G_1$  by $r$.
By our assumption $|U|<  n/(r-1)$. Let $a>0$ satisfy $|U|= n/(r-1)-a$ (note that $a$ may not be an integer).

Consider the bipartite graph $G_2$ with bipartition $U\cup (V(G)\setminus U)$ and edge set $E$, containing the edges between $U$ and $V(G)\setminus U$  not coloured by $r$ in $G$.
Note that for every vertex $u\in U$ less than $a$ edges between $u$ and $V(G)\setminus U$ have colour $r$, as otherwise there would be an $r$-coloured triple star with at least $ n/(r-1)$ vertices, contradicting our assumption.
Therefore
\begin{equation}\label{eqn_num_of_edges}
|E|>  |U|(n-|U|)-a|U|.
\end{equation}

We  use the following lemma which is due to Mubayi \cite{mubayi} and Liu, Morris and Prince \cite{highly_connected}. We present the proof here for the sake of completeness. 

\begin{lemma}\label{lem_bipartite_double_star}
Let $G=(V, E)$ be a bipartite graph with bipartition $V=A\cup B$. Then $G$ contains a double star with at least $(\frac{1}{|A|}+\frac{1}{|B|})|E|$ vertices.
\end{lemma}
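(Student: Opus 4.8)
The plan is a one‑step averaging argument over the edges of $G$. First I would observe that bipartiteness makes double stars cheap to count: for any edge $ab\in E$ with $a\in A$ and $b\in B$, consider the double star $S_{ab}$ whose two centres are $a$ and $b$, consisting of the edge $ab$ together with all edges at $a$ and all edges at $b$. Since $N(a)\subseteq B$ and $N(b)\subseteq A$, the only overlap between $N(a)$ and $N(b)$ is via $a$ and $b$ themselves, so $V(S_{ab})=\{a,b\}\cup\bigl(N(a)\setminus\{b\}\bigr)\cup\bigl(N(b)\setminus\{a\}\bigr)$ has exactly $\deg(a)+\deg(b)$ vertices.

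Next I would bound the sum of these quantities over all edges. We have
\[
\sum_{ab\in E}\bigl(\deg(a)+\deg(b)\bigr)=\sum_{a\in A}\deg(a)^2+\sum_{b\in B}\deg(b)^2,
\]
since each vertex $a\in A$ contributes $\deg(a)$ to exactly $\deg(a)$ edges, and likewise for $B$. By the Cauchy--Schwarz inequality (or convexity), $\sum_{a\in A}\deg(a)^2\ge \bigl(\sum_{a\in A}\deg(a)\bigr)^2/|A|=|E|^2/|A|$, and similarly $\sum_{b\in B}\deg(b)^2\ge |E|^2/|B|$. Hence
\[
\sum_{ab\in E}\bigl(\deg(a)+\deg(b)\bigr)\ \ge\ |E|^2\left(\frac{1}{|A|}+\frac{1}{|B|}\right).
\]
Dividing by $|E|$, the average of $\deg(a)+\deg(b)$ over the edges is at least $\bigl(\tfrac{1}{|A|}+\tfrac{1}{|B|}\bigr)|E|$, so some edge $ab$ attains at least this value; the associated double star $S_{ab}$ then has at least $\bigl(\tfrac{1}{|A|}+\tfrac{1}{|B|}\bigr)|E|$ vertices, as required.

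There is essentially no hard step here: the only thing that needs care is the first observation, namely that the bipartite structure guarantees the double star on an edge has \emph{exactly} $\deg(a)+\deg(b)$ vertices (in a general graph the two neighbourhoods could overlap, weakening the count). Everything else is a routine first‑moment computation, and one should note it applies verbatim even when $G$ has isolated vertices, since those simply do not affect $E$.
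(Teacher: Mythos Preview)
Your proof is correct and follows essentially the same approach as the paper: both sum $\deg(a)+\deg(b)$ over all edges, rewrite this as $\sum_{a\in A}\deg(a)^2+\sum_{b\in B}\deg(b)^2$, apply Cauchy--Schwarz on each side, and conclude by averaging. Your write-up is slightly more explicit in justifying why the double star on an edge $ab$ has exactly $\deg(a)+\deg(b)$ vertices, which the paper leaves implicit.
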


\begin{proof}
For a vertex $v\in V$, let $d(v)$ denote the degree of $v$ in $G$ and for an edge $e=(a,b)\in E$, let $c(e)=d(a)+d(b)$.
By the Cauchy-Schwartz inequality, 
\begin{align*}
&\sum_{e\in E}c(e)=\sum_{a\in A}d(a)^2+\sum_{b\in B}d(b)^2\ge
\frac{1}{|A|}(\sum_{a\in A}d(a))^2+\frac{1}{|B|}(\sum_{b\in B}d(b))^2=(\frac{1}{|A|}+\frac{1}{|B|})|E|^2.
\end{align*}
Therefore, there is an edge $e\in E$ with $c(e)\ge(\frac{1}{|A|}+\frac{1}{|B|})|E|$, i.e.~$G$ contains a double star of the required size.
\end{proof}
By considering the edges with the majority colour, the lemma implies that $G_2$ has a monochromatic double star $G_3$ with at least $(\frac{1}{|U|}+\frac{1}{n-|U|})\frac{|E|}{r-1}$ vertices.
Using inequality \ref{eqn_num_of_edges} for the size of $E$ and the expression for the size of $U$, $G_3$ has at least the following number of vertices.
\begin{align*}
&\frac{1}{r-1}\cdot\frac{n}{|U|(n-|U|)}\cdot\,(\,|U|(n-|U|)-a|U|\,)=\\
&\frac{n}{r-1}-a\frac{n}{r-1}(\frac{1}{ \frac{r-2}{r-1}n +a})>\\
&\frac{n}{r-1}-\frac{a}{r-2} \ge \frac{n}{r-1}-a=|U| 
\end{align*}
Note that we use here the fact that $r\ge 3$.
This implies that $G_3$ has more than $|U|$ vertices,  contradicting the choice of $U$ as the vertex set of the largest monochromatic double star of $G$.
We have thus reached a contradiction to the initial assumption, i.e.~$G$ contains a triple star of the required size.
\end{proof}

\section{Triple stars in local edge colourings}\label{sec_local_colourings}
\begin{proof}[ of theorem \ref{thm_local_triple_stars}]
As in the proof of theorem \ref{thm_triple_star}, we take $U$ to be the vertex set of the largest monochromatic double star, and assume it has $\frac{rn}{r^2-r+1}-a$ vertices, where $a>0$.
We define the bipartite graph $G_2$ as before, and obtain the same inequality \ref{eqn_num_of_edges} for $|E|$.
The following lemma generalizes lemma \ref{lem_bipartite_double_star} from the previous section. A weaker form of this lemma appears in \cite{local_colouring}. 
\begin{lemma}
Let  a bipartite graph $G=(V, E)$ with bipartition $V=A\cup B$ be edge coloured. 
Let $r,t$ be such that every vertex $x\in A$ is incident to edges of at most $r$ distinct colours, and every vertex $y\in B$ is incident to edges of at most $t$ colours.
Then $G$ contains a monochromatic double star with at least $(\frac{1}{|A|r}+\frac{1}{|B|t})|E|$ vertices.
\end{lemma}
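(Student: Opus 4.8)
The plan is to mimic the proof of Lemma \ref{lem_bipartite_double_star}, but to absorb the colouring by applying the Cauchy--Schwarz inequality a second time, now \emph{inside} each vertex, across its colour classes. For a vertex $v\in V$ and a colour $c$, write $d_c(v)$ for the number of edges of colour $c$ incident to $v$, and for an edge $e=(a,b)\in E$ let $c(e)$ be its colour. The monochromatic double star centred at $a$ and $b$ in colour $c(e)$ has exactly $d_{c(e)}(a)+d_{c(e)}(b)$ vertices, so it suffices to exhibit an edge $e=(a,b)$ with $d_{c(e)}(a)+d_{c(e)}(b)\ge\bigl(\frac{1}{|A|r}+\frac{1}{|B|t}\bigr)|E|$.

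To this end I would bound $\sum_{e\in E}\bigl(d_{c(e)}(a)+d_{c(e)}(b)\bigr)$ from below, splitting it into an $A$-part and a $B$-part. Grouping the edges at a fixed vertex $a\in A$ by colour, the $A$-part equals $\sum_{a\in A}\sum_c d_c(a)^2$, where the inner sum ranges over the at most $r$ colours appearing at $a$ (each such colour class contributes $d_c(a)$ edges, each weighted by $d_c(a)$). Cauchy--Schwarz applied to these at most $r$ terms gives $\sum_c d_c(a)^2\ge\frac{1}{r}\bigl(\sum_c d_c(a)\bigr)^2=\frac{d(a)^2}{r}$, and a second application of Cauchy--Schwarz over the vertices of $A$ gives $\sum_{a\in A}d(a)^2\ge\frac{1}{|A|}\bigl(\sum_{a\in A}d(a)\bigr)^2=\frac{|E|^2}{|A|}$. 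Hence the $A$-part is at least $\frac{|E|^2}{|A|r}$, and symmetrically the $B$-part is at least $\frac{|E|^2}{|B|t}$.

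Adding the two estimates yields $\sum_{e\in E}\bigl(d_{c(e)}(a)+d_{c(e)}(b)\bigr)\ge\bigl(\frac{1}{|A|r}+\frac{1}{|B|t}\bigr)|E|^2$, so by averaging over the $|E|$ edges some edge $e=(a,b)$ satisfies $d_{c(e)}(a)+d_{c(e)}(b)\ge\bigl(\frac{1}{|A|r}+\frac{1}{|B|t}\bigr)|E|$, which gives the required monochromatic double star. I do not expect a genuine obstacle here; the only points needing a little care are the observation that a monochromatic double star through $e=(a,b)$ has $d_{c(e)}(a)+d_{c(e)}(b)$ vertices (each centre counted once, since $b$ is a leaf of $a$ and vice versa) and keeping straight which Cauchy--Schwarz uses the colour bound ($r$ or $t$) and which uses the side size ($|A|$ or $|B|$). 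Taking $r=t=1$ recovers Lemma \ref{lem_bipartite_double_star}.
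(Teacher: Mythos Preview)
Your proposal is correct and follows essentially the same approach as the paper: sum $d_{c(e)}(a)+d_{c(e)}(b)$ over all edges, rewrite this as $\sum_{a\in A}\sum_c d_c(a)^2+\sum_{b\in B}\sum_c d_c(b)^2$, bound below by $\bigl(\tfrac{1}{|A|r}+\tfrac{1}{|B|t}\bigr)|E|^2$ via Cauchy--Schwarz, and average. The only cosmetic difference is that the paper applies Cauchy--Schwarz once over the at most $|A|r$ pairs $(a,c)$ with $c$ a colour at $a$, whereas you apply it twice in succession (first over the colours at each vertex, then over the vertices); both yield the identical bound.
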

\begin{proof}
For a vertex $v\in A\cup B$ denote by $I(v)$ the number of colours used in the set of edges in $G$ incident with $v$. For a colour $k$ denote by $d_k(v)$ the number of $k$-coloured-edges containing $v$.
For en edge $e=(a,b)$ in $G$ of colour $k$ let $c(e)=d_k(a)+d_k(b)$.
Then by the Cauchy-Schwartz inequality, using the properties of the colouring,
\begin{align*}
&\sum_{e\in E}c(e)=\sum_{a\in A}\sum_{k\in I(a)}d_k(a)^2+\sum_{b\in B}\sum_{k\in I(b)}d_k(b)^2\ge\\
&\frac{1}{|A|r}(\sum_{a\in A}\sum_{k\in I(a)}d_k(a))^2+\frac{1}{|B|t}(\sum_{b\in B}\sum_{k\in I(b)}d_k(b))^2=(\frac{1}{|A|r}+\frac{1}{|B|t})|E|^2.
\end{align*}
And the claim follows.
\end{proof}
Note that every vertex in $V(G)\setminus U$ spans edges of at most $r$ colours in $G_2$ and every vertex in $U$ spans edges with at most $r-1$ colours in $G_2$, using the fact that $G$ is locally $r$-coloured, and the definition of $G_2$.
Thus $G$ contains a monochromatic double star with at least the following number of vertices.
\begin{align*}
&(\frac{1}{|U|(r-1)}+\frac{1}{(n-|U|)r})|E|\,>\,
(\frac{1}{|U|(r-1)}+\frac{1}{(n-|U|)r})(|U|(n-|U|)-a|U|)=\\
&\frac{n-|U|}{r-1}+\frac{|U|}{r}-\frac{a}{r-1}-a\frac{|U|}{(n-|U|)r}=\\
&\frac{(r-1)n}{r^2-r+1}+\frac{a}{r-1}+\frac{n}{r^2-r+1}-\frac{a}{r}-\frac{a}{r-1}-a\frac{\frac{rn}{r^2-r+1}-a}{\frac{(r-1)^2rn}{r^2-r+1}+a}\ge\\
&\frac{rn}{r^2-r+1}-\frac{a}{r}-\frac{a}{(r-1)^2}\,>\,
\frac{rn}{r^2-r+1}-a=|U|.
\end{align*}
As in theorem \ref{thm_triple_star}, we reached a contradiction to the choice of $U$, thus we have a monochromatic triple star of the required size.
\end{proof}

\section{Concluding Remarks}\label{sec_conclusion}
Problem \ref{prob_1} which is the original question posed by Gy\'arf\'as, remains open. Is it true that for $r\ge3$ every $r$-colouring of $K_n$ contains a monochromatic triple star with at least $n/(r-1)$ vertices? It may also be interesting to consider the weaker version of this question, taking $d=3$ in problem \ref{prob_2}. Does every $r$-colouring of $K_n$ contain a diameter $3$ monochromatic subgraph of size at least $n/(r-1)$?
Finally, it may be interesting to address the same questions in the context of local $r$-colourings (for $r\ge 3$). Namely, is it true that every local $r$-colouring contains a component of diameter at most $3$ with at least $\frac{rn}{r^2-r+1}$ vertices? If so, is there such a component which is a double star?

\section*{Acknowlodgements}
I would like to thank Mikl\'os Ruszink\'o for introducing me to the subject of finding large monochromatic components in edge colourings of $K_n$ and for some useful discussions.

\bibliography{bib}

\begin{thebibliography}{10}

\bibitem{manuscript}
A.~Bialostocki, P.~Dierker, and W.~Voxman.
\newblock Either a graph or its complement is connected -- a continuing saga.
\newblock manuspcript, 2001.

\bibitem{domination}
P.~Erd\H{o}s, R.~Faudree, A.~Gy\'arf\'as, and R.~H. Schelp.
\newblock Domination in colored complete graphs.
\newblock {\em Journal of Graph Theory}, 13:713--718, 1989.

\bibitem{fowler}
T.~Fowler.
\newblock Finding large monochromatic diameter two subgraphs.
\newblock arXiv:math/9908170 [math.CO], 1999.

\bibitem{furedi}
Z.~F{\"u}redi.
\newblock Covering the complete graph by partitions.
\newblock {\em Discrete Math.}, 75:217--226, 1989.

\bibitem{gyarfas}
A.~Gy\'arf\'as.
\newblock Partition coverings and blocking sets in hypergraphs (in hungarian).
\newblock {\em Comm Comput Automat Res Inst Hngar Acad Sci}, 71:62, 1977.

\bibitem{gyarfas_survey}
A.~Gy\'arf\'as.
\newblock Large monochromatic components in edge colorings of graphs -— a
  survey.
\newblock {\em Progress in Mathematics}, 285:77--96, 2010.

\bibitem{local_colouring}
A.~Gy\'arf\'as and G.~N. Sark{\"o}zy.
\newblock Size of monochromatic components in local edge colorings.
\newblock {\em Discrete Math.}, 308:2620--2622, 2008.

\bibitem{double_star}
A.~Gy\'arf\'as and G.~N. S\'ark{\"o}zy.
\newblock Size of monochromatic double stars in edge colorings.
\newblock {\em Graph Combin}, 24:531--536, 2008.

\bibitem{highly_connected}
N.~Prince H.~Liu, R.~Morris.
\newblock Highly connected monochromatic subgraphs of multicoloured graphs, i.
\newblock {\em Journal of Graph Theory}, 61:22--44, 2009.

\bibitem{mubayi}
D.~Mubayi.
\newblock Generalizing the {R}amsey problem through diameter.
\newblock {\em Electronic J. of Combinatorics}, 9:41, 2002.

\bibitem{diam5}
M.~Ruszink\'o.
\newblock Large components in r-edge-colorings of ${K}_n$ have diameter at most
  five.
\newblock {\em J. of Graph Theory}, 69:337--340, 2011.

\end{thebibliography}
\bibliographystyle{plain}
\end{document}